\DeclareFontFamily{U}{mathx}{\hyphenchar\font45}
\DeclareFontShape{U}{mathx}{m}{n}{
  <5> <6> <7> <8> <9> <10>
  <10.95> <12> <14.4> <17.28> <20.74> <24.88>
  mathx10
}{}
\DeclareSymbolFont{mathx}{U}{mathx}{m}{n}
\DeclareMathAccent{\widecheck}{0}{mathx}{"71}
\DeclareMathAccent{\wideparen}{0}{mathx}{"75}
\newcommand{\coleq}{\mathrel{\mathop:}=}
\newcommand{\e}{\varepsilon}
\declaretheorem[name=Theorem,refname={theorem,theorems},Refname={Theorem,Theorems}]{theorem}
\declaretheorem[name=Proposition,refname={proposition,propositions},Refname={Proposition,Propositions},sibling=theorem]{proposition}
\declaretheorem[name=Remark,refname={remark,remarks},Refname={Remark,Remarks},sibling=theorem]{remark}
\crefname{proposition}{proposition}{propositions}
\newcommand{\cB}{{\mathscr{B}}}
\newcommand{\cC}{\mathscr{C}}
\newcommand{\cD}{\mathscr{D}}
\newcommand{\cE}{\mathscr{E}}
\newcommand{\cF}{\mathscr{F}}
\newcommand{\cU}{\mathscr{U}}
\newcommand{\cO}{\mathscr{O}}
\newcommand{\cS}{\mathscr{S}}
\newcommand{\N}{\mathbb{N}}
\newcommand{\R}{\mathbb{R}}
\newcommand{\C}{\mathbb{C}}
\newcommand{\pd}{\partial}
\newcommand{\abso}[1]{\left|#1\right|}
\newcommand{\norm}[1]{\left\lVert#1\right\rVert}
\begin{document}
\title{Quasinormable Fr\'echet spaces and M.~W.~Wong's inequality}
\author{Eduard A.\ Nigsch$^1$}
\address{$^1$Institute of Analysis and Scientific Computing, TU Wien, Wiedner Hauptstraße 8-10, 1040 Wien, Austria}
\email[Corresponding author]{eduard.nigsch@tuwien.ac.at}
\author{Norbert Ortner$^2$}
\address{$^2$Institut für Mathematik, Universität Innsbruck, Technikerstraße 13, 6020 Innsbruck, Austria}
\email{mathematik1@uibk.ac.at}
\date{January 2024}
\begin{abstract}
A short proof of M.~W.~Wong's inequality $\norm{J_{-s}\varphi}_p \le \e \norm{J_{-t}\varphi}_p + C \norm{\varphi}_p$ is given.
\end{abstract}

\maketitle

{\bfseries Keywords: }{Fr\'echet spaces, quasinormability, countable projective and injective limits, compact regularity, elliptic inequality}

{\bfseries MSC2020 Classification: }{46E10, 46E35, 46A13, 46M40, 46F05}

\section{Introduction and Notation.}\label{sec1}

In \cite[Prop. 2.2, p.~1696 and Prop.~4.3 (b), p.~1701]{BNO} we have proved that the space $\cD'_{L^q}$, $1 \le q \le \infty$, is a compactly regular (LB)-space. The first proof uses M.~W.~Wong's inequality \cite[Theorem 2.1, p.~101]{Wo}.

However, the compact regularity of $\cD'_{L^q}$, $1 \le q \le \infty$, also is an easy consequence of the quasinormability of $\cD_{L^p}$ and $\dot\cB$ ($1 \le p \le \infty$, $1/p + 1/q = 1$). Therefore, we recall a characterization of the quasinormability of Fr\'echet spaces in \Autoref{prop1}. The definition of the spaces $\cD_{L^p}$ and $\dot\cB$ and an equivalent description by means of the Bessel kernels is recalled in \Autoref{sec3}. The quasinormability of $\cD_{L^p}$ and $\dot\cB$ is stated in \Autoref{prop2} and the consequences for the strong duals $\cD'_{L^q}$, $1 \le q \le \infty$, are quoted in \Autoref{prop3}. In \Autoref{sec4}, M.~W.~Wong's inequality is derived from the quasinormability of the spaces $\cD_{L^p}$, $\dot \cB$. We employ the notation from \cite{S}.

\section{Characterization of quasinormable Fr\'echet spaces.}\label{sec2}

\begin{proposition}\label{prop1}
Let $E$ be a Fr\'echet space, $\{\cU_k : k \in \N \}$ a decreasing basis of absolutely convex closed neighborhoods of $0$, $E_k' \coleq (E')_{\cU_k^\circ} = \langle \cU_k^\circ \rangle$ the local Banach spaces and $q_k$ the Minkowski functional of $\cU_k^\circ$ (a seminorm in $E_k'$). Then:
\begin{enumerate}[label=(\roman*)]
\item\label{prop1i} The quasinormability of the Fr\'echet space $E$ is equivalent to the inclusion relation
\[ \forall k\ \exists l>k\ \forall m>l\ \forall \e>0\ \exists C >0: \cU_l \subset \e \cU_k + C \cU_m. \]
\item\label{prop1ii} If $E$ is quasinormable we obtain
\[ E_b' = \varinjlim_{k} E_k',\quad E_k' \hookrightarrow E_l' \hookrightarrow E_m' \quad \textrm{if }k<l<m. \]
Furthermore, the following inequality holds:
\[ \forall k\ \exists l>k\ \forall m>l\ \forall \e>0\ \exists C>0: q_l(e') \le \e q_k(e') + C q_m(e')\quad \forall e' \in E_k'. \]
\end{enumerate}
The inductive limit $\varinjlim_k E_k'$ is compactly regular, ultrabornological and complete.
\end{proposition}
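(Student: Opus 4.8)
The plan is to read off the three properties from the dual inequality established in part~\ref{prop1ii}, using only that inequality together with the identification $\varinjlim_k E_k' = E_b'$ and the standard regularity theory of (LB)-spaces. First I observe that the inductive limit is Hausdorff: by part~\ref{prop1ii} its topology is that of $E_b'$, the strong dual of a Fréchet space, which is separated. Hence $\varinjlim_k E_k'$ is a genuine (LB)-space, i.e.\ a Hausdorff countable inductive limit of Banach spaces. Since every (Hausdorff) inductive limit of Banach spaces is ultrabornological, the ultrabornological assertion is immediate; and completeness will follow either from the general fact that the strong dual of a Fréchet space is complete, or from compact regularity below.

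The heart of the matter is compact regularity, and for this I would deduce Retakh's condition (M) from the inequality in part~\ref{prop1ii}. Write $B_k = \{ e' : q_k(e') \le 1 \} = \cU_k^\circ$ for the unit ball of $E_k'$; these sets are bounded in $E_k'$ and increasing. Fix $k$ and take $l>k$ as furnished by the inequality. For each $m>l$ and each $\e>0$ choose $C>0$ with $q_l(e') \le \e\, q_k(e') + C\, q_m(e')$ for all $e'\in E_k'$, and restrict to $B_k$ to obtain $q_l \le \e + C\, q_m$ there. Combined with the trivial bound $q_m \le q_l$ valid on $E_k'$ (the unit balls grow, so the norms shrink), this shows that on the bounded set $B_k$ the norm topologies of $E_l'$ and of every later space $E_m'$ ($m>l$) agree. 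That is exactly condition (M) for the spectrum $(E_k')_k$.

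From condition (M) the remaining properties follow by the structure theory of (LB)-spaces (Retakh, Neus, Bierstedt--Meise): a spectrum of Banach spaces satisfying (M) has boundedly retractive, hence compactly regular, inductive limit, so every compact subset of $\varinjlim_k E_k'$ lies in and is compact in a single step $E_k'$; and such (LB)-spaces are complete. The one formal point requiring care in the middle paragraph is that the quantifier pattern of (M) (``for each $k$ there is a single $l$ that works for all later $m$'') must match the order of quantifiers in part~\ref{prop1ii}, and the inequality there has been arranged precisely so that it does. The routine part is thus extracting (M); the substantive input is the theorem that (M) forces compact regularity and completeness, which I would cite rather than reprove. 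Reproving it—e.g.\ that a boundedly retractive (LB)-space is complete—would be the genuine labor in a fully self-contained account.
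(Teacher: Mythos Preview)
Your argument for the final assertion is correct and runs along the same line as the paper, only spelled out. The paper disposes of the entire proposition by citation: part~\ref{prop1i} is referred to \cite[Theorem~30, p.~183]{BB}; the identification $E_b'=\varinjlim_k E_k'$ to \cite[p.~84]{B}; the inequality in~\ref{prop1ii} is merely noted as a consequence of the inclusion in~\ref{prop1i}; and compact regularity, ultrabornologicality and completeness of $\varinjlim_k E_k'$ are quoted from \cite[Theorem~6.4 and Corollary~6.5]{We}. What you do is unpack that last citation: you check that the inequality in~\ref{prop1ii} is exactly Retakh's condition~(M) for the spectrum $(E_k')_k$ (your computation $q_m\le q_l\le \e + C\,q_m$ on $B_k$ is the right one), and then invoke the Retakh--Neus--Bierstedt--Meise theory for (LB)-spaces satisfying~(M). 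That is precisely the content behind the Wengenroth reference, so the two approaches coincide in substance. Your version makes the mechanism visible---one sees \emph{why} quasinormability of $E$ forces compact regularity of the dual (LB)-space---whereas the paper opts for maximal brevity. You do not address~\ref{prop1i} or the passage from the inclusion to the dual inequality, but since the paper itself handles those by one-line citation, nothing is missing from your treatment of the part you cover.
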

\begin{proof}
\ref{prop1i} follows from \cite[Theorem 30, p.~183]{BB}.

\ref{prop1ii} $E_b' = \varinjlim_k E_k'$ is a consequence of \cite[p.~84]{B}. The inequality follows from the inclusion relation in \ref{prop1i}. The properties of the inductive limit $\varinjlim_k E_k'$ are consequences of \cite[Theorem 6.4, p.~112 and Corollary 6.5, p.~113]{We}.
\end{proof}

\section{
\texorpdfstring{%
Definition of the spaces $\cD_{L^p}$ and $\dot \cB$ by means of Bessel Kernels. Quasinormability of $\cD_{L^p}$ and $\dot\cB$.
}{%
Definition of the spaces DLp and Ḃ by means of Bessel Kernels. Quasinormability of DLp and Ḃ.
}}
\label{sec3}

The \emph{Bessel kernels} $L_z$, $z \in \C$, are defined by $\cF L_z = (1+\abso{x}^2)^{-z/2} \in \cO_M$, i.e., $L_z \in \cO_C'$ (see \cite[(II, 3; 20), p.~47 and Ex.~2° in Chap.~VII, §8, p.~271]{S}). Furthermore, we have
\[ L_z * L_w = L_{z+w}\quad \textrm{for}\ (z,w) \in \C^2 \]
(\cite[(VI, 8; 5), p.~204]{S}). 
The \emph{Fr\'echet spaces $\cD_{L^p}$ ($1 \le p \le \infty$) and $\dot\cB$} are defined by
\begin{align*}
\cD_{L^p} &= \{\,\varphi \in \cE(\R^n)\ |\ \forall \alpha \in \N_0^n: \pd^\alpha \varphi \in L^p(\R^n)\,\}, \\
\dot\cB &= \{\,\varphi \in \cE(\R^n)\ |\ \forall \alpha \in \N_0^n: \pd^\alpha \in \cC_0(\R^n)\,\}
\end{align*}
(\cite[p.~199]{S}). We infer
\[ \cD_{L^p} = \varprojlim_{k} (L_k * L^p) \quad \textrm{for }1 < p < \infty \]
in virtue of \cite[Theorem 7, p.~36]{C}, \cite[Corollary 2, p.~347]{M} or \cite[Theorem 3, p.~135]{St}.

For $p=1$ or $p=\infty$ we have
\[ \varprojlim_k (L_k * L_p) \supset \cD_{L^p} \quad\textrm{and}\quad \varprojlim_k (L_k * \cC_0) \supset \dot\cB. \]
The reverse inclusions follow from the relations
\[ \pd^\alpha \delta = (L_s * \pd^\alpha \delta) * L_{-s}\quad\textrm{and}\quad L_s * \pd^\alpha\delta \in L^1\textrm{ if }s > \abso{\alpha}. \]
$L_z * L_p$ and $L_z * \cC_0$ are Banach spaces, instead of $\cD_{L^p}$ Calder\'on writes $L^p_\infty$ \cite[p.~39]{C}.

The \emph{Bessel potentials} $L_z * \varphi$, $\varphi \in \cS(\R^n)$, $z \in \C$, are identical with the pseudo-differential operators $J_z \varphi$ considered in \cite[p.~98]{Wo}. Due to $L_z \in \cO_C'$ the Bessel potentials $L_z * S$ are defined for all temperate distributions $S \in \cS'$.

Furthermore we remark that the topologies of $\cD_{L^p}$ and $\dot \cB$ are generated by the seminorms
\[ \varphi \mapsto \norm{L_{-k} * \varphi}_p,\quad \norm{L_{-k} * \varphi}_\infty,\quad k \in \N_0, \]
or, equivalently, by
\[ \varphi \mapsto \norm{L_{-s} * \varphi}_p,\quad \norm{L_{-s} * \varphi}_\infty,\quad s>0. \]

\begin{proposition}\label{prop2}
The spaces $\cD_{L_p}$, $1 \le p \le \infty$, and $\dot\cB$ are quasinormable.
\end{proposition}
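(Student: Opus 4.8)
The plan is to verify the inclusion relation in part~\ref{prop1i} of \Autoref{prop1} for $E=\cD_{L^p}$ (and, word for word, for $E=\dot\cB$). By the seminorm description recalled above, a decreasing basis of closed absolutely convex neighbourhoods of $0$ is given by $\cU_k\coleq\{\varphi:\norm{L_{-k}*\varphi}_p\le1\}$, $k\in\N_0$; these decrease because $L_{-k}=L_1*L_{-(k+1)}$ together with $\norm{L_1}_1=1$ forces $\norm{L_{-k}*\varphi}_p\le\norm{L_{-(k+1)}*\varphi}_p$. (For $\dot\cB$ one replaces $\norm{\cdot}_p$ by $\norm{\cdot}_\infty$ and $L^p$ by $\cC_0(\R^n)$.) Given $k$ I would take any $l>k$, say $l=k+1$, and then show for every $m>l$ and every $\e>0$ that $\cU_l\subset\e\cU_k+C\cU_m$ for a suitable $C$. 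The entire argument will rest on Young's inequality $\norm{u*v}_p\le\norm{u}_1\norm{v}_p$, so it is uniform in $p\in[1,\infty]$ and covers the $\cC_0$-case without modification.

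The decomposition I propose is plain mollification. Fix $\rho\in\cS(\R^n)$ with $\int\rho=1$, put $\rho_\delta(x)\coleq\delta^{-n}\rho(x/\delta)$, and for $\varphi\in\cU_l$ write
\[ \varphi=(\varphi-\rho_\delta*\varphi)+\rho_\delta*\varphi, \]
setting $\psi_1\coleq\varphi-\rho_\delta*\varphi$ and $\psi_2\coleq\rho_\delta*\varphi$. Since $\rho_\delta\in\cS$ one has $\pd^\alpha(\rho_\delta*\varphi)=(\pd^\alpha\rho_\delta)*\varphi$, a convolution of an $L^1$-function with an element of $L^p$ (resp.\ $\cC_0$); hence $\psi_2$, and with it $\psi_1$, lie again in $\cD_{L^p}$ (resp.\ $\dot\cB$). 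Keeping both summands inside the space is precisely why the mollifier must be taken Schwartz rather than merely $L^1$. It then remains to choose $\delta=\delta(\e,k,m)$ so that $\psi_1\in\e\cU_k$ and $\psi_2\in C\cU_m$.

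For the first membership I would use $L_{-k}=L_{l-k}*L_{-l}$ and the commutativity of convolution to get
\[ L_{-k}*\psi_1=(L_{l-k}-\rho_\delta*L_{l-k})*(L_{-l}*\varphi), \]
whence $\norm{L_{-k}*\psi_1}_p\le\norm{L_{l-k}-\rho_\delta*L_{l-k}}_1\,\norm{L_{-l}*\varphi}_p\le\norm{L_{l-k}-\rho_\delta*L_{l-k}}_1$. Because $l>k$ makes $L_{l-k}$ an honest $L^1$-kernel, the standard $L^1$-convergence of mollifiers gives $\norm{L_{l-k}-\rho_\delta*L_{l-k}}_1\to0$ as $\delta\to0$, so this is $\le\e$ once $\delta$ is small. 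For the second membership, $L_{-m}=L_{-(m-l)}*L_{-l}$ yields
\[ L_{-m}*\psi_2=(L_{-(m-l)}*\rho_\delta)*(L_{-l}*\varphi), \]
and $L_{-(m-l)}*\rho_\delta\in\cS\subset L^1$ (its Fourier transform $(1+\abso{\xi}^2)^{(m-l)/2}\widehat{\rho_\delta}$ is a Schwartz function), so $\norm{L_{-m}*\psi_2}_p\le\norm{L_{-(m-l)}*\rho_\delta}_1\eqcol C$, using $\norm{L_{-l}*\varphi}_p\le1$.

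The one delicate point — and the only place where $k<l<m$ enters — is that a single parameter $\delta$ must do two jobs at once: it has to be small enough that $\varphi-\rho_\delta*\varphi$ is negligible in the weaker norm $\norm{L_{-k}*\cdot}_p$, yet for that same $\delta$ the mollified part must remain bounded in the stronger norm $\norm{L_{-m}*\cdot}_p$. These two demands decouple cleanly: smallness needs only $\delta\downarrow0$ together with $l-k>0$ (so that $L_{l-k}\in L^1$), whereas boundedness holds for every fixed $\delta>0$ because $m-l>0$ makes $L_{-(m-l)}*\rho_\delta$ an $L^1$-function whose norm $C=C_\e$ is allowed to blow up as $\delta\downarrow0$. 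No compactness and no $p$-dependent multiplier theory are required, which is exactly what lets the same proof serve all $\cD_{L^p}$, $1\le p\le\infty$, and $\dot\cB$ at once.
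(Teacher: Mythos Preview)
Your argument is correct and coincides with the paper's second proof. The paper takes $L_k*B_{1,p}$ (which is exactly your $\cU_k$) as neighbourhood basis and splits via $\delta=(\delta-\varphi_R)+\varphi_R$ with a test-function approximate identity $\varphi_R$, arriving at the same two estimates $\norm{(\delta-\varphi_R)*L_{l-k}}_1$ and $\norm{L_{l-m}*\varphi_R}_1$ that you obtain with your Schwartz mollifier $\rho_\delta$; your version is in fact slightly more explicit about why the first quantity tends to $0$ (namely $L_{l-k}\in L^1$ and $L^1$-continuity of mollification). The paper also records an independent first proof via the isomorphisms $\cD_{L^p}\cong\ell^p\,\widehat\otimes\,s$ and $\dot\cB\cong c_0\,\widehat\otimes\,s$, invoking the nuclearity of $s$ and Grothendieck's permanence of quasinormability under completed tensor products---a structural shortcut that avoids any kernel computations but imports heavier machinery.
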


\begin{proof}

\underline{1st proof.} By \cite[Theorem 1, p.~766]{Va} and \cite[3.2 Theorem, p.~414]{Vo} we obtain the isomorphisms
\[ \cD_{L^p} \cong \ell^p \widehat\otimes s,\ 1 \le p \le \infty,\ \textrm{and}\ \dot\cB \cong c_0 \widehat\otimes s \]
where $s$ is the space of rapidly decreasing sequences. Then the permanence property \cite[Chapter II, Proposition 13, p.~76]{G} and the nuclearity of the space $s$ imply the quasinormability of $\cD_{L^p}$ and $\dot\cB$.

\underline{2nd proof.} We take the sets $L_k * B_{1,p}$, $1 \le p \le \infty$, $k \in \N_0$, as a decreasing basis of neighborhoods in $\cD_{L^p}$, $B_{1,p}$ the unit ball of $L^p$. By \Autoref{prop2}, the quasinormability of $\cD_{L^p}$ is equivalent to the following inclusion relation:
\begin{multline*}
\forall k \in \N_0\ \exists l>k\ \forall m > l\ \forall \e>0\ \exists C>0:\\
L_l * B_{1,p} \subset \e L_k * B_{1,p} + C L_m * B_{1,p}.
\end{multline*}
By the group law for the Bessel kernels we obtain (with $\varphi \in \cD \setminus \{0\}$, $\varphi_R (x) \coleq R^{-n} \varphi(Rx)$):
\[ L_l * B_{1,p} = (\delta - \varphi_R)*L_{l-k} * L_k * B_{1,p} + \varphi_R * L_{l-m} * L_m * B_{1,p}. \]
Let $f \in L_l * B_{1,p}$ and $f = f_1 + f_2$ with
\begin{align*}
f_1 &\coleq ((\delta-\varphi_R) * L_{l-k}) * L_k * g_1,\quad g_1 \in B_{1,p}\ \textrm{and}\\
f_2 & \coleq (L_{l-m} * \varphi_R) * L_m * g_2,\quad g_2 \in B_{1,p}.
\end{align*}
Then,
\[ \norm{L_{-k} * f_1}_p \le \norm{(\delta - \varphi_R) * L_{l-k}}_1 \underbrace{\norm{g_1}_p}_{\le 1} \]
and
\[ \norm{L_{-m} * f_2}_p \le \norm{L_{l-m} * \varphi_R}_1 \underbrace{\norm{g_2}_p}_{\le 1}, \]
i.e., $f_1 \in \norm{(\delta-\varphi_R)*L_{l-k}}_1 (L_k * B_{1,p})$ and $f_2 \in \norm{L_{l-m} * \varphi_R}_1(L_m * B_{1,p})$ wherein we have used the continuity of the convolution $* : L^1 \times L^p \to L^p$.

Hence,
\[ f = f_1 + f_2 \in \e(L_k * B_{1,p}) + C(L_m * B_{1,p}) \]
if we set $\e \coleq \norm{(\delta - \varphi_R)*L_{l-k}}_1$ and $C \coleq \norm{L_{l-m} * \varphi_R}_1$. Thus, the inclusion in \Autoref{prop1} \ref{prop1i},
\[ L_l * B_{1,p} \subset \e L_k * B_{1,p} + C L_m * B_{1,p}, \]
is shown. The proof for $\dot \cB$ follows if $p=\infty$.
\end{proof}

\begin{proposition}\label{prop3}
Let $1/p + 1/q = 1$, $1 \le q \le \infty$. Thje strong duals $\cD'_{L^q}$ of $\cD_{L^p}$, $1 \le p < \infty$ and of $\dot\cB$ are compactly regular, complete, ultrabornological (LB)-spaces.
\end{proposition}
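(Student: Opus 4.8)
The plan is to read this off as an immediate corollary of the two preceding propositions, so that essentially no new analysis is required. By \Autoref{prop2} the Fr\'echet spaces $\cD_{L^p}$ for $1 \le p < \infty$ and $\dot\cB$ are quasinormable. I would therefore apply \Autoref{prop1}~\ref{prop1ii} to each such space $E$: it yields that the strong dual $E_b'$ is the countable inductive limit $\varinjlim_k E_k'$ of the local Banach spaces $E_k' = (E')_{\cU_k^\circ}$ with continuous linking maps, and — this is the entire point of quasinormability — that this inductive limit is compactly regular, ultrabornological and complete. Since the $E_k'$ are Banach spaces, $\varinjlim_k E_k'$ is an (LB)-space in the usual sense.

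It then remains only to identify these strong duals concretely with the spaces $\cD'_{L^q}$. By the duality theory of \cite{S}, the strong dual of $\cD_{L^p}$ for $1 \le p < \infty$ is $\cD'_{L^q}$ with $1/p + 1/q = 1$, and the strong dual of $\dot\cB$ is $\cD'_{L^1}$. As $p$ ranges over $[1,\infty)$ the conjugate exponent $q$ ranges over $(1,\infty]$, while the value $q = 1$ is supplied by $\dot\cB$; together these exhaust all $1 \le q \le \infty$, which is why the statement pairs $\cD_{L^p}$ (with $p$ strictly less than $\infty$) and $\dot\cB$ rather than $\cD_{L^\infty}$. Taking the decreasing neighborhood basis $\{L_k * B_{1,p} : k \in \N_0\}$ of $\cD_{L^p}$ already introduced in the second proof of \Autoref{prop2} (and the analogous basis built from the unit ball of $\cC_0$ for $\dot\cB$), the associated polars $\cU_k^\circ$ and local Banach spaces $E_k'$ are the expected $L^q$-type duals of the Banach spaces $L_k * L^p$ (resp. $L_k * \cC_0$).

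Once these identifications are in place, the properties transferred by \Autoref{prop1}~\ref{prop1ii} — compact regularity, ultrabornologicity and completeness — are exactly the assertions of the proposition, and the proof is complete. I do not expect any deep topological obstacle here, since the genuine functional-analytic content has been absorbed into \Autoref{prop2} and \Autoref{prop1}; the only point demanding a little care is the bookkeeping that realizes the polars $\cU_k^\circ$ and the local Banach spaces $E_k'$ correctly, so that the abstract inductive limit $\varinjlim_k E_k'$ is seen to coincide, as a locally convex space, with the classical (LB)-space $\cD'_{L^q}$.
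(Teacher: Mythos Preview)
Your proposal is correct and follows exactly the paper's approach: the paper's proof consists of the single line ``The proof is a consequence of \Autoref{prop1}~\ref{prop1ii}'', and your argument simply unpacks this by invoking \Autoref{prop2} for quasinormability and then \Autoref{prop1}~\ref{prop1ii} for the properties of the strong dual. The additional bookkeeping you mention (identifying the duals with $\cD'_{L^q}$ and matching the exponents) is implicit in the paper's statement and setup.
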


The proof is a consequence of \Autoref{prop1} \ref{prop1ii}.

\section{M.~W.~Wong's inequality.}\label{sec4}

\begin{proposition}[{\cite[Theorem 2.1, p.~101]{Wo}}]\label{prop4}
If $1 \le q < \infty$, $0 < s < t$ then $\forall \e>0$ $\exists C>0$:
\[ \norm{L_{-s} * \varphi}_q \le \e \norm{L_{-t} * \varphi}_q + C \norm{\varphi}_q\quad \forall \varphi \in \cS(\R^n). \]
\end{proposition}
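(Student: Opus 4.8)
The plan is to transcribe the convolution decomposition from the second proof of \Autoref{prop2} directly at the level of the function $L_{-s}*\varphi$, rather than at the level of neighborhoods. The point is that the three quantities in the asserted inequality are precisely the primal seminorms of $\cD_{L^q}$ of orders $s$, $t$ and $0$ (recall $L_0 = \delta$, so $\norm{\varphi}_q = \norm{L_0 * \varphi}_q$), and the hypothesis $0 < s < t$ places them in the order intermediate, largest, smallest that the quasinormability estimate controls. Doing the decomposition on the fixed function $L_{-s}*\varphi$, with the \emph{same} $\varphi$ in both pieces, is what produces the clean inequality directly, rather than the inf-convolution that a naive passage through the inclusion relation of \Autoref{prop1}\,\ref{prop1i} would give.

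First I would fix a mollifier family $\varphi_R$ as in the proof of \Autoref{prop2} and split $\delta = (\delta - \varphi_R) + \varphi_R$, so that
\[ L_{-s}*\varphi = (\delta - \varphi_R)*L_{-s}*\varphi + \varphi_R * L_{-s}*\varphi. \]
Next, using the group law $L_a * L_b = L_{a+b}$ I would route the first summand through $L_{-t}*\varphi$ by writing $L_{-s} = L_{t-s}*L_{-t}$, and leave the second summand attached to $\varphi$. Applying Young's inequality for $* : L^1 \times L^q \to L^q$ to each summand then yields
\[ \norm{L_{-s}*\varphi}_q \le \norm{(\delta-\varphi_R)*L_{t-s}}_1\,\norm{L_{-t}*\varphi}_q + \norm{L_{-s}*\varphi_R}_1\,\norm{\varphi}_q. \]
It then remains to set $C \coleq \norm{L_{-s}*\varphi_R}_1$ and to choose $R$ so large that the first coefficient drops below the prescribed $\e$, exactly as in \Autoref{prop2}.

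Two facts carry the argument, and the second is where I expect the only real subtlety. Since $t - s > 0$, the kernel $L_{t-s}$ is a genuine integrable function, so $(\delta-\varphi_R)*L_{t-s} \in L^1$, and because $\varphi_R \to \delta$ suitably its $L^1$-norm tends to $0$ as $R$ grows; this is the same computation that makes $\e$ small in \Autoref{prop2}. The delicate point is the finiteness of $C$: the kernel $L_{-s}$ is \emph{not} integrable for $s>0$, being only a distribution in $\cO_C'$. What saves the estimate is that convolution smooths it, since $\varphi_R \in \cD \subset \cS$ and $L_{-s} \in \cO_C'$ force $L_{-s}*\varphi_R \in \cS \subset L^1$, whence $C < \infty$ for each fixed $R$. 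Once these two points are secured the inequality follows at once, and it holds for every $0 < s < t$, free of the quantifier restriction inherent in the abstract dual estimate of \Autoref{prop1}\,\ref{prop1ii}.
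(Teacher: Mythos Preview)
Your argument is correct. The decomposition $L_{-s}*\varphi = (\delta-\varphi_R)*L_{t-s}*(L_{-t}*\varphi) + (L_{-s}*\varphi_R)*\varphi$ together with Young's inequality $*:L^1\times L^q\to L^q$ gives the estimate at once; the two verifications you single out (integrability of $L_{t-s}$ because $t-s>0$, and $L_{-s}*\varphi_R\in\cS\subset L^1$ because $L_{-s}\in\cO_C'$ and $\varphi_R\in\cS$) are exactly the points that need checking, and both hold. The only tacit assumption is that the mollifier is normalized so that $\varphi_R\to\delta$ in the sense making $\norm{(\delta-\varphi_R)*L_{t-s}}_1\to 0$; this is standard for $L_{t-s}\in L^1$.

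Your route, however, differs from the paper's. The paper does \emph{not} redo the convolution splitting in $L^q$; instead it passes to the dual. Using $\norm{f}_q=\sup_{g\in B_{1,p}}\abso{\langle g,f\rangle}$ with $1/p+1/q=1$, it rewrites Wong's inequality as the inclusion $L_{-s}*B_{1,p}\subset \e(L_{-t}*B_{1,p})+C\,B_{1,p}$, recognizes this (after the substitutions $-s=l-m$, $-t=k-m$ and the equivalence of the seminorm families indexed by $\N_0$ and by $s>0$) as the neighborhood inclusion of \Autoref{prop1}\,\ref{prop1i} in $\cD_{L^p}$, and then simply invokes the quasinormability of $\cD_{L^p}$ established in \Autoref{prop2}. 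So the paper's proof is structural---Wong's inequality \emph{is} quasinormability, read through duality---whereas yours is a direct primal computation that transplants the mechanism of the second proof of \Autoref{prop2} from $\cD_{L^p}$-neighborhoods to a single $L^q$ function. Your approach is more elementary and yields real exponents $0<s<t$ without the detour through integer indices; the paper's approach is the one that realizes the article's program of deriving the inequality \emph{from} quasinormability rather than alongside it.
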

\begin{proof}
By taking into account that
\[ \norm{f}_q = \sup_{g \in B_{1,p}} \abso{\langle g, f \rangle},\quad 1/p + 1/q = 1, \]
Wong's inequality is equivalent to the inclusion relation
\[ L_{-s} * B_{1,p} \subset \e ( L_{-t} * B_{1,p}) + C B_{1,p} \]
which is equivalent to \ref{prop1i} in \Autoref{prop1} if we set $-s = l-m$, $m>l$, $-t = k-m$, $l>k$ and if we take into account the equivalence of the seminorms (in $\cD_{L^p}$)
\begin{gather*}
\varphi \mapsto \norm{L_{-s} * \varphi}_p,\quad \varphi \in \cS,\ s > 0\ \textrm{and}\\
\varphi \mapsto \norm{L_{-k} * \varphi}_p,\quad \varphi \in \cS,\ k \in \N_0,
\end{gather*}
and, furthermore, the quasinormability of the Fr\'echet space $\cD_{L^p}$.
\end{proof}

\begin{remark}In virtue of the quasinormability of the space $\cD_{L^1}$ the inequality in \Autoref{prop4} also is valid if $q=\infty$.
\end{remark}

\newcommand{\bibarxiv}[1]{arXiv: \href{http://arxiv.org/abs/#1}{\texttt{#1}}}
\newcommand{\bibdoi}[1]{{\sc doi:} \href{http://dx.doi.org/#1}{\texttt{#1}}}

\printbibliography

\end{document}